\documentclass[10pt,a4paper]{article}
\usepackage{tikz}
\usepackage[width=14.00cm, height=25.00cm]{geometry}
\usepackage[T1]{fontenc}
\usepackage[utf8]{inputenc}
\usepackage{amsmath,amssymb,amsfonts,amsthm,amscd, bm, mathtools}
\usepackage{enumerate}
\usepackage{todonotes}
\usepackage{tikz}
\usepackage{hyperref}
\usetikzlibrary{positioning}
\usetikzlibrary{decorations.pathmorphing}
\usetikzlibrary{decorations.text}
\newtheorem{thm}{Theorem}[section]
\newtheorem{prop}[thm]{Proposition}
\newtheorem{lemma}[thm]{Lemma}

\newtheorem{defn}[thm]{Definition}

\newtheorem{ex}[thm]{Example}
\newtheorem{claim}[thm]{Claim}

\title{Edge-ends versus topological ends of graphs}
\author{Leandro Aurichi, Paulo Magalhães Júnior and Guilherme Eduardo Pinto}

\newcommand{\Addresses}{{
  \bigskip
  \footnotesize
  L. ~Aurichi, \textsc{Instituto de Ci\^encias Matem\'aticas e de Computa\c c\~ao, Universidade de S\~ao Paulo\\
	Avenida Trabalhador s\~ao-carlense, 400,  S\~ao Carlos, SP, 13566-590, Brazil}\par\nopagebreak
  \textit{E-mail address}, L.~Aurichi: \texttt{aurichi@icmc.usp.br}

  \medskip
  P.~Magalhães Jr, \textsc{Departamento de Matemática - Universidade Federal do Piauí\\
	Avenida Nossa Senhora de Fátima, s/n, bairro Ininga, Teresina, PI, 64.049-550, Brazil}\par\nopagebreak
  \textit{E-mail address}, P.~Magalhães Jr: \texttt{pjr.mat@gmail.com}
  
  \medskip
  G.~Pinto, \textsc{Instituto de Ci\^encias Matem\'aticas e de Computa\c c\~ao, Universidade de S\~ao Paulo\\
	Avenida Trabalhador s\~ao-carlense, 400,  S\~ao Carlos, SP, 13566-590, Brazil}\par\nopagebreak
  \textit{E-mail address}, G.~Pinto: \texttt{guipullus.gp@usp.br}
 
}}
\date{}

\begin{document}

\maketitle
\begin{abstract}
Diestel and Kühn proved that the topological ends of an infinite graph are precisely its undominated graph ends, yielding a canonical embedding of the space of topological ends into the space of graph ends. For edge-ends, introduced by Hahn, Laviolette and Širáň, such an embedding does not exist in general.

In this note, we characterize the class of infinite graphs for which the topological ends admit a natural injective map into the space of edge-ends that is compatible with the canonical maps between end spaces. Our characterization is purely combinatorial and is expressed in terms of edge-equivalence classes of vertices.

Moreover, when such an embedding exists, we identify precisely which edge-ends arise from topological ends, showing that they are exactly the edge-ends containing a non-dominated ray. This establishes a parallel result to the theorem of Diestel and Kühn for edge-end spaces. 
\end{abstract}

\section{Introduction}

\paragraph{}
A ray $r$ in an infinite graph $G$ is a one-way infinite path. A tail of a ray $r$ is an infinite, connected subgraph of $r$. We say that two rays $r$ and $s$ of $G$ are equivalent, and we denote it by $r\sim s$, if for every finite set of vertices $X\subset V(G)$, $r$ and $s$ have tails in the same connected component of $G\setminus X$. This relation is an equivalence relation on the set of rays of $G$, denoted by $\mathcal{R}(G)$. In \cite{halin}, Halin defined the ends of a graph as the equivalence classes with respect to $\sim$. The set of equivalence classes, $\Omega(G)=\frac{\mathcal{R}(G)}{\sim}=\lbrace [r]: r\in \mathcal{R}(G)\rbrace$, is called the end space. There is a natural topology with which we can equip the space $\Omega(G)$; for more on this topology, see \cite{halin}.

In \cite{Freudenthal}, Freudenthal introduced the concept of ends for certain topological spaces as points at infinity that aim to compactify these spaces. More formally, we define topological ends as follows: let $X$ be a Hausdorff topological space and $A_0\supset A_1 \supset A_2 \supset \cdots \supset A_n \supset \cdots$, a decreasing chain of open sets of $X$, with the boundary of each $A_n$, $\partial A_n$, is a compact set and $\bigcap_{n\in\mathbb{N}}\overline{A_n}=\emptyset$. We say that two sequences with these properties, $\langle A_n: n\in\mathbb{N}\rangle$ and $\langle B_n: n\in\mathbb{N}\rangle$, are equivalent if for every $n\in\mathbb{N}$ there exist $m,k\in\mathbb{N}$ such that $A_n\supset B_m$ and $B_n\supset A_k$. The equivalence classes of these sequences are the topological ends of $X$. The set of all topological ends of $X$ is denoted by $\mathcal{C}(X)$. If $\langle A_n: n\in\mathbb{N}\rangle$ is a sequence contained in an end $\epsilon$, we say that $\langle A_n:n\in\mathbb{N}\rangle$ represents $\epsilon$.

If we consider an infinite graph $G$, equipped with the topology of the 1-simplex, then for locally finite graphs these two definitions coincide. For more general graphs, this is not the case. We say that a vertex $v\in V(G)$ dominates a ray $r$ of $G$ if there are infinitely many paths from $v$ to the ray $r$ that are pairwise disjoint except at $v$. In \cite{topologicalendsversuscombinatorialends}, Diestel and Kühn showed that the topological ends of a graph ($\Omega'(G)$) are in bijection with the non-dominated graph ends. That is, they showed that there exists a natural injective function that maps a topological end $\epsilon$ of $G$ to a non-dominated end of $G$, $\varepsilon$.

$$\begin{array}{ccccc}
   \phi:& \Omega'(G) & \longrightarrow & \Omega(G)  \\
    & \epsilon  & \mapsto & \varepsilon
\end{array}$$

Thus there always exists a copy of $\Omega'(G)$ in $\Omega(G)$. Results that establish a relationship between combinatorial structures and the topological ends of a graph are of great relevance. Other works along this line can be found, for example, in \cite{carmesintreedecompositiontopologicalends, pitztreedecompositiontopologicalends}.

We say that two rays $r$ and $s$ of $G$ are edge-equivalent, and we denote it by $r\sim_E s$, if for every finite set of edges $X\subset E(G)$, $r$ and $s$ have tails in the same connected component of $G\setminus X$. This relation is an equivalence relation on the set of rays of $G$. In \cite{edge-ends}, Hahn, Laviolette, and Širáň defined the edge-ends of a graph as the equivalence classes with respect to $\sim_E$. The set of equivalence classes, $\Omega_E(G)=\frac{\mathcal{R}(G)}{\sim_E}=\lbrace [r]_E: r\in\mathcal{R}(G)\rbrace$, is called the edge-end space. There is a natural topology that we can equip the space $\Omega_E(G)$ with, similar to the topology of $\Omega(G)$; for more on this topology, see \cite{aurichi2024topologicalremarksendedgeend, edge-ends}.
Although the definition of an edge-end was introduced in 1997, only recently has there been the beginning of a great interest in studies on these spaces, such as, for example, \cite{aurichi2024topologicalremarksendedgeend, aurichilucasedgeconectividade, boska2025edgedirectioncompactedgeendspaces, pitz2025metrizationtheoremedgeendspaces, real2025subbasepropertydescribingedgeend}.

Note that, in any graph $G$, we always have a natural surjective function from $\Omega(G)$ to $\Omega_E(G)$, which maps an end $\varepsilon=[r]$ to the edge-end $\varepsilon'=[r]_E$.

$$\begin{array}{ccccc}
   f:& \Omega(G) & \longrightarrow & \Omega_E(G)  \\
    & [r]  & \mapsto & [r]_E
\end{array}$$

Just as a natural function is established between
$\Omega'(G)$ and $\Omega(G)$ in \cite{topologicalendsversuscombinatorialends}, we find that it is possible to establish a natural function between $\Omega'(G)$ and $\Omega_E(G)$. A simple way to do this is to consider the function $f_E=f\circ \phi$. Unlike end spaces, it is not always possible to "see" the topological ends of $G$ inside the edge-ends of $G$, that is, it is not always possible to obtain a natural injective function from the topological ends of a graph $G$ to the edge-ends of $G$. In other words, given a graph $G$, it is not always possible for $f_E=f\circ \phi$ to be injective and to make Diagram \ref{diagrama 1} commute. See the example below.

\begin{figure}[ht]
            \centering

\tikzset{every picture/.style={line width=0.75pt}} %set default line width to 0.75pt        

\begin{tikzpicture}[x=0.6pt,y=0.6pt,yscale=-1,xscale=1]

\draw    (254.5,51.6) -- (300.9,86.4) ;
\draw [shift={(302.5,87.6)}, rotate = 216.87] [color={rgb, 255:red, 0; green, 0; blue, 0 }  ][line width=0.75]    (10.93,-3.29) .. controls (6.95,-1.4) and (3.31,-0.3) .. (0,0) .. controls (3.31,0.3) and (6.95,1.4) .. (10.93,3.29)   ;
%Straight Lines [id:da1253673324609177] 
\draw    (274.5,33.6) -- (376.5,32.62) ;
\draw [shift={(378.5,32.6)}, rotate = 179.45] [color={rgb, 255:red, 0; green, 0; blue, 0 }  ][line width=0.75]    (10.93,-3.29) .. controls (6.95,-1.4) and (3.31,-0.3) .. (0,0) .. controls (3.31,0.3) and (6.95,1.4) .. (10.93,3.29)   ;

\draw    (356.5,89.6) -- (403.95,50.86) ;
\draw [shift={(405.5,49.6)}, rotate = 140.77] [color={rgb, 255:red, 0; green, 0; blue, 0 }  ][line width=0.75]    (10.93,-3.29) .. controls (6.95,-1.4) and (3.31,-0.3) .. (0,0) .. controls (3.31,0.3) and (6.95,1.4) .. (10.93,3.29)   ;

\draw (311,81.4) node [anchor=north west][inner sep=0.75pt]    {$\Omega ( G)$};

\draw (225,25.4) node [anchor=north west][inner sep=0.75pt]    {$\Omega '( G)$};

\draw (388,26.4) node [anchor=north west][inner sep=0.75pt]    {$\Omega _{E}( G)$};

\draw (311,8.4) node [anchor=north west][inner sep=0.75pt]    {$f_{E}$};

\draw (386,67.4) node [anchor=north west][inner sep=0.75pt]    {$f$};

\draw (252,65.4) node [anchor=north west][inner sep=0.75pt]    {$\phi $};

\end{tikzpicture}

     \vspace{0.5cm} \text{Diagram 1} 
    \label{diagrama 1}
\end{figure}

\begin{ex}
Consider the graph $G$ from Figure \ref{não injetivo}. Note that $\vert \Omega (G)\vert =2$, since it is enough to remove any pair of vertices that are connected by a vertical edge to separate the graph. Furthermore, since there are no dominated rays in the graph $G$, we have $\Omega'(G)=\Omega(G)$ and, therefore, $\vert \Omega'(G)\vert =2$. On the other hand, note that $\vert \Omega_E(G)\vert = 1$, since all rays are edge-equivalent, as the vertices on the top part are all infinitely edge-connected. Thus, it is not possible to obtain an injective function from $\Omega'(G)$ into $\Omega_E(G)$. In particular, it is not possible to obtain an injective function $f_E:\Omega'(G)\longrightarrow \Omega_E(G)$ such that Diagram \ref{diagrama 1} commutes.
 
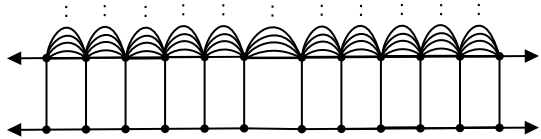
\begin{figure}[ht]
    \centering

\tikzset{every picture/.style={line width=0.75pt}}  

\begin{tikzpicture}[x=0.6pt,y=0.6pt,yscale=-1,xscale=1]

\draw    (168,41.98) -- (189.67,41.83) ;
\draw [shift={(165,42)}, rotate = 359.61] [fill={rgb, 255:red, 0; green, 0; blue, 0 }  ][line width=0.08]  [draw opacity=0] (8.93,-4.29) -- (0,0) -- (8.93,4.29) -- cycle    ;

\draw    (189.67,41.83) -- (214.33,41.67) ;
\draw [shift={(214.33,41.67)}, rotate = 359.61] [color={rgb, 255:red, 0; green, 0; blue, 0 }  ][fill={rgb, 255:red, 0; green, 0; blue, 0 }  ][line width=0.75]      (0, 0) circle [x radius= 2.01, y radius= 2.01]   ;
\draw [shift={(189.67,41.83)}, rotate = 359.61] [color={rgb, 255:red, 0; green, 0; blue, 0 }  ][fill={rgb, 255:red, 0; green, 0; blue, 0 }  ][line width=0.75]      (0, 0) circle [x radius= 2.01, y radius= 2.01]   ;

\draw    (214.33,41.67) -- (239,41.5) ;

\draw    (239,41.5) -- (263.67,41.33) ;
\draw [shift={(239,41.5)}, rotate = 359.61] [color={rgb, 255:red, 0; green, 0; blue, 0 }  ][fill={rgb, 255:red, 0; green, 0; blue, 0 }  ][line width=0.75]      (0, 0) circle [x radius= 2.01, y radius= 2.01]   ;

\draw    (263.67,41.33) -- (288.33,41.17) ;
\draw [shift={(288.33,41.17)}, rotate = 359.61] [color={rgb, 255:red, 0; green, 0; blue, 0 }  ][fill={rgb, 255:red, 0; green, 0; blue, 0 }  ][line width=0.75]      (0, 0) circle [x radius= 2.01, y radius= 2.01]   ;
\draw [shift={(263.67,41.33)}, rotate = 359.61] [color={rgb, 255:red, 0; green, 0; blue, 0 }  ][fill={rgb, 255:red, 0; green, 0; blue, 0 }  ][line width=0.75]      (0, 0) circle [x radius= 2.01, y radius= 2.01]   ;
 
\draw    (288.33,41.17) -- (313,41) ;
\draw [shift={(313,41)}, rotate = 359.61] [color={rgb, 255:red, 0; green, 0; blue, 0 }  ][fill={rgb, 255:red, 0; green, 0; blue, 0 }  ][line width=0.75]      (0, 0) circle [x radius= 2.01, y radius= 2.01]   ;
 
\draw    (168,86.98) -- (189.67,86.83) ;
\draw [shift={(165,87)}, rotate = 359.61] [fill={rgb, 255:red, 0; green, 0; blue, 0 }  ][line width=0.08]  [draw opacity=0] (8.93,-4.29) -- (0,0) -- (8.93,4.29) -- cycle    ;

\draw    (189.67,86.83) -- (214.33,86.67) ;
\draw [shift={(214.33,86.67)}, rotate = 359.61] [color={rgb, 255:red, 0; green, 0; blue, 0 }  ][fill={rgb, 255:red, 0; green, 0; blue, 0 }  ][line width=0.75]      (0, 0) circle [x radius= 2.01, y radius= 2.01]   ;
\draw [shift={(189.67,86.83)}, rotate = 359.61] [color={rgb, 255:red, 0; green, 0; blue, 0 }  ][fill={rgb, 255:red, 0; green, 0; blue, 0 }  ][line width=0.75]      (0, 0) circle [x radius= 2.01, y radius= 2.01]   ;

\draw    (214.33,86.67) -- (239,86.5) ;

\draw    (239,86.5) -- (263.67,86.33) ;
\draw [shift={(239,86.5)}, rotate = 359.61] [color={rgb, 255:red, 0; green, 0; blue, 0 }  ][fill={rgb, 255:red, 0; green, 0; blue, 0 }  ][line width=0.75]      (0, 0) circle [x radius= 2.01, y radius= 2.01]   ;
 
\draw    (263.67,86.33) -- (288.33,86.17) ;
\draw [shift={(288.33,86.17)}, rotate = 359.61] [color={rgb, 255:red, 0; green, 0; blue, 0 }  ][fill={rgb, 255:red, 0; green, 0; blue, 0 }  ][line width=0.75]      (0, 0) circle [x radius= 2.01, y radius= 2.01]   ;
\draw [shift={(263.67,86.33)}, rotate = 359.61] [color={rgb, 255:red, 0; green, 0; blue, 0 }  ][fill={rgb, 255:red, 0; green, 0; blue, 0 }  ][line width=0.75]      (0, 0) circle [x radius= 2.01, y radius= 2.01]   ;

\draw    (288.33,86.17) -- (313,86) ;
\draw [shift={(313,86)}, rotate = 359.61] [color={rgb, 255:red, 0; green, 0; blue, 0 }  ][fill={rgb, 255:red, 0; green, 0; blue, 0 }  ][line width=0.75]      (0, 0) circle [x radius= 2.01, y radius= 2.01]   ;

\draw    (313,41) -- (313,86) ;
 
\draw    (288.33,41.17) -- (288.33,86.17) ;
 
\draw    (263.67,41.33) -- (263.67,86.33) ;

\draw    (239,41.5) -- (239,86.5) ;
 
\draw    (214.33,41.67) -- (214.33,86.67) ;
 
\draw    (189.67,41.83) -- (189.67,86.83) ;
 
\draw    (288.33,41.17) .. controls (289,40.5) and (300,31) .. (313,41) ;
 
\draw    (288.33,41.17) .. controls (287.5,37) and (303,21) .. (313,41) ;
 
\draw    (288.33,41.17) .. controls (286,34.5) and (304.5,12.5) .. (313,41) ;
 
\draw    (288.33,41.17) .. controls (286,34.5) and (302,1) .. (313,41) ;
 
\draw  [dash pattern={on 0.84pt off 2.51pt}]  (300,8) -- (300,18) ;
 
\draw    (263.33,41.17) -- (288,41) ;
 
\draw    (263.33,41.17) .. controls (264,40.5) and (275,31) .. (288,41) ;
 
\draw    (263.33,41.17) .. controls (262.5,37) and (278,21) .. (288,41) ;

\draw    (263.33,41.17) .. controls (261,34.5) and (279.5,12.5) .. (288,41) ;
 
\draw    (263.33,41.17) .. controls (261,34.5) and (277,1) .. (288,41) ;
 
\draw  [dash pattern={on 0.84pt off 2.51pt}]  (275,8) -- (275,18) ;

\draw    (239.83,42.17) -- (264.5,42) ;
 
\draw    (239.83,42.17) .. controls (240.5,41.5) and (251.5,32) .. (264.5,42) ;
 
\draw    (239.83,42.17) .. controls (239,38) and (254.5,22) .. (264.5,42) ;
 
\draw    (239.83,42.17) .. controls (237.5,35.5) and (256,13.5) .. (264.5,42) ;

\draw    (239.83,42.17) .. controls (237.5,35.5) and (253.5,2) .. (264.5,42) ;
 
\draw  [dash pattern={on 0.84pt off 2.51pt}]  (251.5,9) -- (251.5,19) ;

\draw    (214.33,41.67) -- (239,41.5) ;

\draw    (214.33,41.67) .. controls (215,41) and (226,31.5) .. (239,41.5) ;
 
\draw    (214.33,41.67) .. controls (213.5,37.5) and (229,21.5) .. (239,41.5) ;

\draw    (214.33,41.67) .. controls (212,35) and (230.5,13) .. (239,41.5) ;
 
\draw    (214.33,41.67) .. controls (212,35) and (228,1.5) .. (239,41.5) ;
 
\draw  [dash pattern={on 0.84pt off 2.51pt}]  (226,8.5) -- (226,18.5) ;
 
\draw    (190.33,42.17) -- (215,42) ;

\draw    (190.33,42.17) .. controls (191,41.5) and (202,32) .. (215,42) ;

\draw    (190.33,42.17) .. controls (189.5,38) and (205,22) .. (215,42) ;
 
\draw    (190.33,42.17) .. controls (188,35.5) and (206.5,13.5) .. (215,42) ;

\draw    (190.33,42.17) .. controls (188,35.5) and (204,2) .. (215,42) ;
 
\draw  [dash pattern={on 0.84pt off 2.51pt}]  (202,9) -- (202,19) ;

\draw    (494,85.52) -- (472.33,85.67) ;
\draw [shift={(497,85.5)}, rotate = 179.61] [fill={rgb, 255:red, 0; green, 0; blue, 0 }  ][line width=0.08]  [draw opacity=0] (8.93,-4.29) -- (0,0) -- (8.93,4.29) -- cycle    ;
 
\draw    (472.33,85.67) -- (447.67,85.83) ;
\draw [shift={(447.67,85.83)}, rotate = 179.61] [color={rgb, 255:red, 0; green, 0; blue, 0 }  ][fill={rgb, 255:red, 0; green, 0; blue, 0 }  ][line width=0.75]      (0, 0) circle [x radius= 2.01, y radius= 2.01]   ;
\draw [shift={(472.33,85.67)}, rotate = 179.61] [color={rgb, 255:red, 0; green, 0; blue, 0 }  ][fill={rgb, 255:red, 0; green, 0; blue, 0 }  ][line width=0.75]      (0, 0) circle [x radius= 2.01, y radius= 2.01]   ;
 
\draw    (447.67,85.83) -- (423,86) ;

\draw    (423,86) -- (398.33,86.17) ;
\draw [shift={(423,86)}, rotate = 179.61] [color={rgb, 255:red, 0; green, 0; blue, 0 }  ][fill={rgb, 255:red, 0; green, 0; blue, 0 }  ][line width=0.75]      (0, 0) circle [x radius= 2.01, y radius= 2.01]   ;
 
\draw    (398.33,86.17) -- (373.67,86.33) ;
\draw [shift={(373.67,86.33)}, rotate = 179.61] [color={rgb, 255:red, 0; green, 0; blue, 0 }  ][fill={rgb, 255:red, 0; green, 0; blue, 0 }  ][line width=0.75]      (0, 0) circle [x radius= 2.01, y radius= 2.01]   ;
\draw [shift={(398.33,86.17)}, rotate = 179.61] [color={rgb, 255:red, 0; green, 0; blue, 0 }  ][fill={rgb, 255:red, 0; green, 0; blue, 0 }  ][line width=0.75]      (0, 0) circle [x radius= 2.01, y radius= 2.01]   ;
 
\draw    (373.67,86.33) -- (349,86.5) ;
\draw [shift={(349,86.5)}, rotate = 179.61] [color={rgb, 255:red, 0; green, 0; blue, 0 }  ][fill={rgb, 255:red, 0; green, 0; blue, 0 }  ][line width=0.75]      (0, 0) circle [x radius= 2.01, y radius= 2.01]   ;
 
\draw    (494,40.52) -- (472.33,40.67) ;
\draw [shift={(497,40.5)}, rotate = 179.61] [fill={rgb, 255:red, 0; green, 0; blue, 0 }  ][line width=0.08]  [draw opacity=0] (8.93,-4.29) -- (0,0) -- (8.93,4.29) -- cycle    ;
 
\draw    (472.33,40.67) -- (447.67,40.83) ;
\draw [shift={(447.67,40.83)}, rotate = 179.61] [color={rgb, 255:red, 0; green, 0; blue, 0 }  ][fill={rgb, 255:red, 0; green, 0; blue, 0 }  ][line width=0.75]      (0, 0) circle [x radius= 2.01, y radius= 2.01]   ;
\draw [shift={(472.33,40.67)}, rotate = 179.61] [color={rgb, 255:red, 0; green, 0; blue, 0 }  ][fill={rgb, 255:red, 0; green, 0; blue, 0 }  ][line width=0.75]      (0, 0) circle [x radius= 2.01, y radius= 2.01]   ;

\draw    (447.67,40.83) -- (423,41) ;
 
\draw    (423,41) -- (398.33,41.17) ;
\draw [shift={(423,41)}, rotate = 179.61] [color={rgb, 255:red, 0; green, 0; blue, 0 }  ][fill={rgb, 255:red, 0; green, 0; blue, 0 }  ][line width=0.75]      (0, 0) circle [x radius= 2.01, y radius= 2.01]   ;
 
\draw    (398.33,41.17) -- (373.67,41.33) ;
\draw [shift={(373.67,41.33)}, rotate = 179.61] [color={rgb, 255:red, 0; green, 0; blue, 0 }  ][fill={rgb, 255:red, 0; green, 0; blue, 0 }  ][line width=0.75]      (0, 0) circle [x radius= 2.01, y radius= 2.01]   ;
\draw [shift={(398.33,41.17)}, rotate = 179.61] [color={rgb, 255:red, 0; green, 0; blue, 0 }  ][fill={rgb, 255:red, 0; green, 0; blue, 0 }  ][line width=0.75]      (0, 0) circle [x radius= 2.01, y radius= 2.01]   ;
 
\draw    (373.67,41.33) -- (349,41.5) ;
\draw [shift={(349,41.5)}, rotate = 179.61] [color={rgb, 255:red, 0; green, 0; blue, 0 }  ][fill={rgb, 255:red, 0; green, 0; blue, 0 }  ][line width=0.75]      (0, 0) circle [x radius= 2.01, y radius= 2.01]   ;
 
\draw    (349,86.5) -- (349,41.5) ;
 
\draw    (373.67,86.33) -- (373.67,41.33) ;
 
\draw    (398.33,86.17) -- (398.33,41.17) ;
 
\draw    (423,86) -- (423,41) ;
%Straight Lines [id:da7783286895302717] 
\draw    (447.67,85.83) -- (447.67,40.83) ;
%Straight Lines [id:da5999287868730145] 
\draw    (472.33,85.67) -- (472.33,40.67) ;
%Straight Lines [id:da546350566024382] 
\draw    (398.67,86.33) -- (374,86.5) ;
%Straight Lines [id:da8104247424241364] 
\draw    (422.17,85.33) -- (397.5,85.5) ;
%Straight Lines [id:da9442097994678573] 
\draw    (447.67,85.83) -- (423,86) ;
%Straight Lines [id:da4250012304998426] 
\draw    (471.67,85.33) -- (447,85.5) ;
%Straight Lines [id:da18112895539287022] 
\draw    (348.93,41.05) -- (373.6,40.99) ;
%Straight Lines [id:da8342013664350961] 
\draw    (373.6,40.99) -- (398.27,40.93) ;
%Straight Lines [id:da8479569953232728] 
\draw    (373.6,40.99) -- (398.27,40.93) ;
%Curve Lines [id:da17699923933300632] 
\draw    (373.6,40.99) .. controls (374.27,40.33) and (385.31,30.88) .. (398.27,40.93) ;
%Curve Lines [id:da5832777475255654] 
\draw    (373.6,40.99) .. controls (372.78,36.82) and (388.35,20.89) .. (398.27,40.93) ;
%Curve Lines [id:da07990761679489666] 
\draw    (373.6,40.99) .. controls (371.3,34.32) and (389.89,12.4) .. (398.27,40.93) ;
%Curve Lines [id:da8907353156866532] 
\draw    (373.6,40.99) .. controls (371.3,34.32) and (387.44,0.89) .. (398.27,40.93) ;
%Straight Lines [id:da8478187515742643] 
\draw    (349.6,41.39) -- (374.27,41.33) ;
%Curve Lines [id:da286410505126133] 
\draw    (349.6,41.39) .. controls (350.27,40.72) and (361.31,31.27) .. (374.27,41.33) ;
%Curve Lines [id:da9453275661655898] 
\draw    (349.6,41.39) .. controls (348.78,37.22) and (364.35,21.29) .. (374.27,41.33) ;
%Curve Lines [id:da8242828139748426] 
\draw    (349.6,41.39) .. controls (347.29,34.71) and (365.89,12.79) .. (374.27,41.33) ;
%Curve Lines [id:da6246710712706535] 
\draw    (349.6,41.39) .. controls (347.29,34.71) and (363.44,1.28) .. (374.27,41.33) ;
%Straight Lines [id:da2886139177953734] 
\draw  [dash pattern={on 0.84pt off 2.51pt}]  (361.41,8.27) -- (361.37,18.27) ;
%Straight Lines [id:da17800927419268542] 
\draw  [dash pattern={on 0.84pt off 2.51pt}]  (385.91,8.38) -- (385.87,18.38) ;
%Straight Lines [id:da06666678091412981] 
\draw    (398.93,40.55) -- (423.6,40.49) ;
%Straight Lines [id:da45816623679329616] 
\draw    (423.6,40.49) -- (448.27,40.43) ;
%Straight Lines [id:da3225319640663805] 
\draw    (423.6,40.49) -- (448.27,40.43) ;
%Curve Lines [id:da4652680534650874] 
\draw    (423.6,40.49) .. controls (424.27,39.83) and (435.31,30.38) .. (448.27,40.43) ;
%Curve Lines [id:da6132150559879036] 
\draw    (423.6,40.49) .. controls (422.78,36.32) and (438.35,20.39) .. (448.27,40.43) ;
%Curve Lines [id:da1575460153198769] 
\draw    (423.6,40.49) .. controls (421.3,33.82) and (439.89,11.9) .. (448.27,40.43) ;
%Curve Lines [id:da4079489543806507] 
\draw    (423.6,40.49) .. controls (421.3,33.82) and (437.44,0.39) .. (448.27,40.43) ;
%Straight Lines [id:da2880636497958602] 
\draw    (399.6,40.89) -- (424.27,40.83) ;
%Curve Lines [id:da708096573398709] 
\draw    (399.6,40.89) .. controls (400.27,40.22) and (411.31,30.77) .. (424.27,40.83) ;
%Curve Lines [id:da18233177919761656] 
\draw    (399.6,40.89) .. controls (398.78,36.72) and (414.35,20.79) .. (424.27,40.83) ;
%Curve Lines [id:da545693405136769] 
\draw    (399.6,40.89) .. controls (397.29,34.21) and (415.89,12.29) .. (424.27,40.83) ;
%Curve Lines [id:da8588634127553535] 
\draw    (399.6,40.89) .. controls (397.29,34.21) and (413.44,0.78) .. (424.27,40.83) ;
%Straight Lines [id:da2747582300695437] 
\draw  [dash pattern={on 0.84pt off 2.51pt}]  (411.41,7.77) -- (411.37,17.77) ;
%Straight Lines [id:da40711354729171434] 
\draw  [dash pattern={on 0.84pt off 2.51pt}]  (435.91,7.88) -- (435.87,17.88) ;
%Straight Lines [id:da6970796045744452] 
\draw    (446.93,40.55) -- (471.6,40.49) ;
%Straight Lines [id:da7390130061048283] 
\draw    (447.6,40.89) -- (472.27,40.83) ;
%Curve Lines [id:da7060344771689393] 
\draw    (447.6,40.89) .. controls (448.27,40.22) and (459.31,30.77) .. (472.27,40.83) ;
%Curve Lines [id:da8923146640880121] 
\draw    (447.6,40.89) .. controls (446.78,36.72) and (462.35,20.79) .. (472.27,40.83) ;
%Curve Lines [id:da9453836245603933] 
\draw    (447.6,40.89) .. controls (445.29,34.21) and (463.89,12.29) .. (472.27,40.83) ;
%Curve Lines [id:da12670662534539168] 
\draw    (447.6,40.89) .. controls (445.29,34.21) and (461.44,0.78) .. (472.27,40.83) ;
 
\draw  [dash pattern={on 0.84pt off 2.51pt}]  (459.41,7.77) -- (459.37,17.77) ;
 
\draw    (313.43,41.55) -- (347.58,41.49) ;

\draw    (314.35,41.89) -- (348.5,41.83) ;
 
\draw    (314.35,41.89) .. controls (315.28,41.22) and (330.56,31.77) .. (348.5,41.83) ;
 
\draw    (314.35,41.89) .. controls (313.23,37.72) and (334.78,21.79) .. (348.5,41.83) ;

\draw    (314.35,41.89) .. controls (311.16,35.21) and (336.91,13.29) .. (348.5,41.83) ;
 
\draw    (314.35,41.89) .. controls (311.16,35.21) and (333.51,1.78) .. (348.5,41.83) ;

\draw  [dash pattern={on 0.84pt off 2.51pt}]  (330.7,8.77) -- (330.64,18.77) ;
 
\draw    (313,86) -- (349,86.5) ;
\end{tikzpicture}

    \caption{A graph where it is not possible to obtain an injection from $\Omega'(G)$ into $\Omega_E(G)$.}
   
    \label{não injetivo}
\end{figure}

\end{ex}

In this note, we are interested in combinatorially characterizing the graphs for which it is possible to obtain an injective function $f_E$ that makes Diagram \ref{diagrama 1} commute, that is, the graphs $G$ in which it is possible to obtain a copy of the topological ends of $G$ inside the edge-end space of $G$. The main result of this paper is Theorem \ref{theo1}, in which we provide a combinatorial characterization for the presented problem. Furthermore, we proved which edge-ends of a graph $G$ are topological ends of this graph, establishing a result parallel to the main result of \cite{topologicalendsversuscombinatorialends}.

\section{The characterization}
\paragraph{}
In this section, we will present and prove a characterization theorem for infinite graphs $G$ in which it is possible to obtain an injective function $f_E:\Omega'(G)\longrightarrow \Omega_E(G)$ that makes Diagram \ref{diagrama 1} commute. To do this, we will need the concept of the edge-equivalence class (or edge class) of a vertex $v$ in a graph $G$.

\begin{defn}
Let $G$ be a graph and $v,u\in V(G)$. We say that $v$ and $u$ are edge-equivalent, and we denote it by $u\sim_E v$, if for every finite set of edges $F\subset E(G)$, $u$ and $v$ are in the same connected component of $G\setminus F$.

\end{defn}

Note that $u\sim_E v$ is equivalent to $u$ and $v$ being infinitely edge-connected. The relation $\sim_E$ is an equivalence relation on the vertices of a graph $G$. This concept of edge-equivalent vertices appears in some works in the literature; see, for example, \cite{boska2025edgedirectioncompactedgeendspaces, topologicalpathscycles}.

\begin{defn}
Let $G$ be a graph and $v\in V(G)$ a vertex. We will denote its edge class by $\mathcal{C}_v=[v]_E=\lbrace u\in V(G): u\sim_E v \rbrace$.

\end{defn}

We say that an edge class $\mathcal{C}_v$ of a vertex $v\in V(G)$ contains a ray $r\in \mathcal{R}(G)$ if $r$ passes through infinitely many elements of $\mathcal{C}_v$.

\begin{defn}
    Let $G$ be a graph and $v\in V(G)$. We say that $v$ edge-dominates a ray $r$ of $G$ if for every finite set of edges $X\subset E(G)$, $r$ has a tail in the same connected component of $v$ in $G\setminus X$.
\end{defn}

In \cite{edge-ends}, the following definition of when a graph is end-correlated is presented:

\begin{defn}
    A graph $G$ is said to be end-correlated if any two rays of $G$ that are edge-equivalent and non-dominated are vertex-equivalent.
\end{defn}

Using this definition, it is proved in Theorem 1 in \cite{edge-ends} that a graph $G$ being end-correlated is equivalent to two other combinatorial properties, namely: $G$ is not a caterpillar, and $G$ has the symmetric domination property. In our main theorem, in particular, we will prove two new equivalences with the property of being end-correlated.

Before we prove the main theorem, note that characterizing when $f_E=f\circ \phi$ is injective is different from characterizing when the end and edge-end spaces coincide. See the example below.

\begin{ex}
Note that in the graph $G$ from Figure \ref{fig3}, just like in the graph from Figure \ref{não injetivo}, we have $\vert \Omega(G)\vert =2$ and $\vert\Omega_E(G)\vert=1$, but in this example, $\Omega'(G)=\Omega_E(G)$, thus $\vert\Omega'(G)\vert =1$. Note also that $f_E:\Omega'(G)\longrightarrow \Omega_E(G)$ is a bijection while $f:\Omega(G)\longrightarrow\Omega_E(G)$ is not injective.

    \begin{figure}[ht]
        \centering

\tikzset{every picture/.style={line width=0.75pt}}   

\begin{tikzpicture}[x=0.6pt,y=0.6pt,yscale=-1,xscale=1]

%Straight Lines [id:da36261628921800115] 
\draw    (171.33,47.98) -- (193,47.83) ;
\draw [shift={(168.33,48)}, rotate = 359.61] [fill={rgb, 255:red, 0; green, 0; blue, 0 }  ][line width=0.08]  [draw opacity=0] (8.93,-4.29) -- (0,0) -- (8.93,4.29) -- cycle    ;
%Straight Lines [id:da14905344416748234] 
\draw    (193,47.83) -- (217.67,47.67) ;
\draw [shift={(217.67,47.67)}, rotate = 359.61] [color={rgb, 255:red, 0; green, 0; blue, 0 }  ][fill={rgb, 255:red, 0; green, 0; blue, 0 }  ][line width=0.75]      (0, 0) circle [x radius= 2.01, y radius= 2.01]   ;
\draw [shift={(193,47.83)}, rotate = 359.61] [color={rgb, 255:red, 0; green, 0; blue, 0 }  ][fill={rgb, 255:red, 0; green, 0; blue, 0 }  ][line width=0.75]      (0, 0) circle [x radius= 2.01, y radius= 2.01]   ;
%Straight Lines [id:da582589618790029] 
\draw    (217.67,47.67) -- (242.33,47.5) ;
%Straight Lines [id:da11876948802682064] 
\draw    (242.33,47.5) -- (267,47.33) ;
\draw [shift={(242.33,47.5)}, rotate = 359.61] [color={rgb, 255:red, 0; green, 0; blue, 0 }  ][fill={rgb, 255:red, 0; green, 0; blue, 0 }  ][line width=0.75]      (0, 0) circle [x radius= 2.01, y radius= 2.01]   ;
%Straight Lines [id:da5307292996997093] 
\draw    (267,47.33) -- (291.67,47.17) ;
\draw [shift={(291.67,47.17)}, rotate = 359.61] [color={rgb, 255:red, 0; green, 0; blue, 0 }  ][fill={rgb, 255:red, 0; green, 0; blue, 0 }  ][line width=0.75]      (0, 0) circle [x radius= 2.01, y radius= 2.01]   ;
\draw [shift={(267,47.33)}, rotate = 359.61] [color={rgb, 255:red, 0; green, 0; blue, 0 }  ][fill={rgb, 255:red, 0; green, 0; blue, 0 }  ][line width=0.75]      (0, 0) circle [x radius= 2.01, y radius= 2.01]   ;
%Straight Lines [id:da25076220629115686] 
\draw    (291.67,47.17) -- (316.33,47) ;
\draw [shift={(316.33,47)}, rotate = 359.61] [color={rgb, 255:red, 0; green, 0; blue, 0 }  ][fill={rgb, 255:red, 0; green, 0; blue, 0 }  ][line width=0.75]      (0, 0) circle [x radius= 2.01, y radius= 2.01]   ;
%Straight Lines [id:da39639238836943336] 
\draw    (171.33,92.98) -- (193,92.83) ;
\draw [shift={(168.33,93)}, rotate = 359.61] [fill={rgb, 255:red, 0; green, 0; blue, 0 }  ][line width=0.08]  [draw opacity=0] (8.93,-4.29) -- (0,0) -- (8.93,4.29) -- cycle    ;
%Straight Lines [id:da37517293203679813] 
\draw    (193,92.83) -- (217.67,92.67) ;
\draw [shift={(217.67,92.67)}, rotate = 359.61] [color={rgb, 255:red, 0; green, 0; blue, 0 }  ][fill={rgb, 255:red, 0; green, 0; blue, 0 }  ][line width=0.75]      (0, 0) circle [x radius= 2.01, y radius= 2.01]   ;
\draw [shift={(193,92.83)}, rotate = 359.61] [color={rgb, 255:red, 0; green, 0; blue, 0 }  ][fill={rgb, 255:red, 0; green, 0; blue, 0 }  ][line width=0.75]      (0, 0) circle [x radius= 2.01, y radius= 2.01]   ;
%Straight Lines [id:da5448989337543276] 
\draw    (217.67,92.67) -- (242.33,92.5) ;
%Straight Lines [id:da9201978449260212] 
\draw    (242.33,92.5) -- (267,92.33) ;
\draw [shift={(242.33,92.5)}, rotate = 359.61] [color={rgb, 255:red, 0; green, 0; blue, 0 }  ][fill={rgb, 255:red, 0; green, 0; blue, 0 }  ][line width=0.75]      (0, 0) circle [x radius= 2.01, y radius= 2.01]   ;
%Straight Lines [id:da2963575325890063] 
\draw    (267,92.33) -- (291.67,92.17) ;
\draw [shift={(291.67,92.17)}, rotate = 359.61] [color={rgb, 255:red, 0; green, 0; blue, 0 }  ][fill={rgb, 255:red, 0; green, 0; blue, 0 }  ][line width=0.75]      (0, 0) circle [x radius= 2.01, y radius= 2.01]   ;
\draw [shift={(267,92.33)}, rotate = 359.61] [color={rgb, 255:red, 0; green, 0; blue, 0 }  ][fill={rgb, 255:red, 0; green, 0; blue, 0 }  ][line width=0.75]      (0, 0) circle [x radius= 2.01, y radius= 2.01]   ;
%Straight Lines [id:da9528968724141348] 
\draw    (291.67,92.17) -- (316.33,92) ;
\draw [shift={(316.33,92)}, rotate = 359.61] [color={rgb, 255:red, 0; green, 0; blue, 0 }  ][fill={rgb, 255:red, 0; green, 0; blue, 0 }  ][line width=0.75]      (0, 0) circle [x radius= 2.01, y radius= 2.01]   ;
%Straight Lines [id:da7144013712172975] 
\draw    (316.33,47) -- (316.33,92) ;
%Straight Lines [id:da6390583396632779] 
\draw    (291.67,47.17) -- (291.67,92.17) ;
%Straight Lines [id:da6639420671895112] 
\draw    (267,47.33) -- (267,92.33) ;
%Straight Lines [id:da2667862540935686] 
\draw    (242.33,47.5) -- (242.33,92.5) ;
%Straight Lines [id:da605314787130963] 
\draw    (217.67,47.67) -- (217.67,92.67) ;
%Straight Lines [id:da7206370116950308] 
\draw    (193,47.83) -- (193,92.83) ;
%Curve Lines [id:da43603686112021345] 
\draw    (291.67,47.17) .. controls (292.33,46.5) and (303.33,37) .. (316.33,47) ;
%Curve Lines [id:da9998253771033485] 
\draw    (291.67,47.17) .. controls (290.83,43) and (306.33,27) .. (316.33,47) ;
%Curve Lines [id:da9692478666338079] 
\draw    (291.67,47.17) .. controls (289.33,40.5) and (307.83,18.5) .. (316.33,47) ;
%Curve Lines [id:da7926295759351148] 
\draw    (291.67,47.17) .. controls (289.33,40.5) and (305.33,7) .. (316.33,47) ;
%Straight Lines [id:da590252953500889] 
\draw  [dash pattern={on 0.84pt off 2.51pt}]  (303.33,14) -- (303.33,24) ;
%Straight Lines [id:da201605252192826] 
\draw    (266.67,47.17) -- (291.33,47) ;
%Curve Lines [id:da9825925077562615] 
\draw    (266.67,47.17) .. controls (267.33,46.5) and (278.33,37) .. (291.33,47) ;
%Curve Lines [id:da1474692736949773] 
\draw    (266.67,47.17) .. controls (265.83,43) and (281.33,27) .. (291.33,47) ;
%Curve Lines [id:da08664927277538825] 
\draw    (266.67,47.17) .. controls (264.33,40.5) and (282.83,18.5) .. (291.33,47) ;
%Curve Lines [id:da6342471324318424] 
\draw    (266.67,47.17) .. controls (264.33,40.5) and (280.33,7) .. (291.33,47) ;
%Straight Lines [id:da4934816109675154] 
\draw  [dash pattern={on 0.84pt off 2.51pt}]  (278.33,14) -- (278.33,24) ;
%Straight Lines [id:da09610089872277527] 
\draw    (243.17,48.17) -- (267.83,48) ;
%Curve Lines [id:da30670453994182745] 
\draw    (243.17,48.17) .. controls (243.83,47.5) and (254.83,38) .. (267.83,48) ;
%Curve Lines [id:da5926755376415206] 
\draw    (243.17,48.17) .. controls (242.33,44) and (257.83,28) .. (267.83,48) ;
%Curve Lines [id:da10183927565216011] 
\draw    (243.17,48.17) .. controls (240.83,41.5) and (259.33,19.5) .. (267.83,48) ;
%Curve Lines [id:da7776062836542526] 
\draw    (243.17,48.17) .. controls (240.83,41.5) and (256.83,8) .. (267.83,48) ;
%Straight Lines [id:da47434003857257] 
\draw  [dash pattern={on 0.84pt off 2.51pt}]  (254.83,15) -- (254.83,25) ;
%Straight Lines [id:da5514012814229587] 
\draw    (217.67,47.67) -- (242.33,47.5) ;
%Curve Lines [id:da696490201189019] 
\draw    (217.67,47.67) .. controls (218.33,47) and (229.33,37.5) .. (242.33,47.5) ;
%Curve Lines [id:da42897156751956667] 
\draw    (217.67,47.67) .. controls (216.83,43.5) and (232.33,27.5) .. (242.33,47.5) ;
%Curve Lines [id:da8367672011558364] 
\draw    (217.67,47.67) .. controls (215.33,41) and (233.83,19) .. (242.33,47.5) ;
%Curve Lines [id:da098499969958327] 
\draw    (217.67,47.67) .. controls (215.33,41) and (231.33,7.5) .. (242.33,47.5) ;
%Straight Lines [id:da9914287565316805] 
\draw  [dash pattern={on 0.84pt off 2.51pt}]  (229.33,14.5) -- (229.33,24.5) ;
%Straight Lines [id:da05085801053150396] 
\draw    (193.67,48.17) -- (218.33,48) ;
%Curve Lines [id:da43757814206428813] 
\draw    (193.67,48.17) .. controls (194.33,47.5) and (205.33,38) .. (218.33,48) ;
%Curve Lines [id:da3298268511128847] 
\draw    (193.67,48.17) .. controls (192.83,44) and (208.33,28) .. (218.33,48) ;
%Curve Lines [id:da11432459633014247] 
\draw    (193.67,48.17) .. controls (191.33,41.5) and (209.83,19.5) .. (218.33,48) ;
%Curve Lines [id:da872454117261296] 
\draw    (193.67,48.17) .. controls (191.33,41.5) and (207.33,8) .. (218.33,48) ;
%Straight Lines [id:da20036064895966998] 
\draw  [dash pattern={on 0.84pt off 2.51pt}]  (205.33,15) -- (205.33,25) ;
%Straight Lines [id:da6915505365958583] 
\draw    (497.33,91.52) -- (475.67,91.67) ;
\draw [shift={(500.33,91.5)}, rotate = 179.61] [fill={rgb, 255:red, 0; green, 0; blue, 0 }  ][line width=0.08]  [draw opacity=0] (8.93,-4.29) -- (0,0) -- (8.93,4.29) -- cycle    ;
%Straight Lines [id:da7359061530582391] 
\draw    (475.67,91.67) -- (451,91.83) ;
\draw [shift={(451,91.83)}, rotate = 179.61] [color={rgb, 255:red, 0; green, 0; blue, 0 }  ][fill={rgb, 255:red, 0; green, 0; blue, 0 }  ][line width=0.75]      (0, 0) circle [x radius= 2.01, y radius= 2.01]   ;
\draw [shift={(475.67,91.67)}, rotate = 179.61] [color={rgb, 255:red, 0; green, 0; blue, 0 }  ][fill={rgb, 255:red, 0; green, 0; blue, 0 }  ][line width=0.75]      (0, 0) circle [x radius= 2.01, y radius= 2.01]   ;
%Straight Lines [id:da4668540334943476] 
\draw    (451,91.83) -- (426.33,92) ;
%Straight Lines [id:da1275178097466969] 
\draw    (426.33,92) -- (401.67,92.17) ;
\draw [shift={(426.33,92)}, rotate = 179.61] [color={rgb, 255:red, 0; green, 0; blue, 0 }  ][fill={rgb, 255:red, 0; green, 0; blue, 0 }  ][line width=0.75]      (0, 0) circle [x radius= 2.01, y radius= 2.01]   ;
%Straight Lines [id:da11223677341530569] 
\draw    (401.67,92.17) -- (377,92.33) ;
\draw [shift={(377,92.33)}, rotate = 179.61] [color={rgb, 255:red, 0; green, 0; blue, 0 }  ][fill={rgb, 255:red, 0; green, 0; blue, 0 }  ][line width=0.75]      (0, 0) circle [x radius= 2.01, y radius= 2.01]   ;
\draw [shift={(401.67,92.17)}, rotate = 179.61] [color={rgb, 255:red, 0; green, 0; blue, 0 }  ][fill={rgb, 255:red, 0; green, 0; blue, 0 }  ][line width=0.75]      (0, 0) circle [x radius= 2.01, y radius= 2.01]   ;
%Straight Lines [id:da14680351016154325] 
\draw    (377,92.33) -- (352.33,92.5) ;
\draw [shift={(352.33,92.5)}, rotate = 179.61] [color={rgb, 255:red, 0; green, 0; blue, 0 }  ][fill={rgb, 255:red, 0; green, 0; blue, 0 }  ][line width=0.75]      (0, 0) circle [x radius= 2.01, y radius= 2.01]   ;
%Straight Lines [id:da45066884646157257] 
\draw    (497.33,46.52) -- (475.67,46.67) ;
\draw [shift={(500.33,46.5)}, rotate = 179.61] [fill={rgb, 255:red, 0; green, 0; blue, 0 }  ][line width=0.08]  [draw opacity=0] (8.93,-4.29) -- (0,0) -- (8.93,4.29) -- cycle    ;
%Straight Lines [id:da009465495429349269] 
\draw    (475.67,46.67) -- (451,46.83) ;
\draw [shift={(451,46.83)}, rotate = 179.61] [color={rgb, 255:red, 0; green, 0; blue, 0 }  ][fill={rgb, 255:red, 0; green, 0; blue, 0 }  ][line width=0.75]      (0, 0) circle [x radius= 2.01, y radius= 2.01]   ;
\draw [shift={(475.67,46.67)}, rotate = 179.61] [color={rgb, 255:red, 0; green, 0; blue, 0 }  ][fill={rgb, 255:red, 0; green, 0; blue, 0 }  ][line width=0.75]      (0, 0) circle [x radius= 2.01, y radius= 2.01]   ;
%Straight Lines [id:da08582930490146923] 
\draw    (451,46.83) -- (426.33,47) ;
%Straight Lines [id:da24228002576757024] 
\draw    (426.33,47) -- (401.67,47.17) ;
\draw [shift={(426.33,47)}, rotate = 179.61] [color={rgb, 255:red, 0; green, 0; blue, 0 }  ][fill={rgb, 255:red, 0; green, 0; blue, 0 }  ][line width=0.75]      (0, 0) circle [x radius= 2.01, y radius= 2.01]   ;
%Straight Lines [id:da10800733433406884] 
\draw    (401.67,47.17) -- (377,47.33) ;
\draw [shift={(377,47.33)}, rotate = 179.61] [color={rgb, 255:red, 0; green, 0; blue, 0 }  ][fill={rgb, 255:red, 0; green, 0; blue, 0 }  ][line width=0.75]      (0, 0) circle [x radius= 2.01, y radius= 2.01]   ;
\draw [shift={(401.67,47.17)}, rotate = 179.61] [color={rgb, 255:red, 0; green, 0; blue, 0 }  ][fill={rgb, 255:red, 0; green, 0; blue, 0 }  ][line width=0.75]      (0, 0) circle [x radius= 2.01, y radius= 2.01]   ;
%Straight Lines [id:da37667051227826975] 
\draw    (377,47.33) -- (352.33,47.5) ;
\draw [shift={(352.33,47.5)}, rotate = 179.61] [color={rgb, 255:red, 0; green, 0; blue, 0 }  ][fill={rgb, 255:red, 0; green, 0; blue, 0 }  ][line width=0.75]      (0, 0) circle [x radius= 2.01, y radius= 2.01]   ;
%Straight Lines [id:da24093476235975397] 
\draw    (352.33,92.5) -- (352.33,47.5) ;
%Straight Lines [id:da718096603678754] 
\draw    (377,92.33) -- (377,47.33) ;
%Straight Lines [id:da03946453350880064] 
\draw    (401.67,92.17) -- (401.67,47.17) ;
%Straight Lines [id:da18923789902839272] 
\draw    (426.33,92) -- (426.33,47) ;
%Straight Lines [id:da5232819018973228] 
\draw    (451,91.83) -- (451,46.83) ;
%Straight Lines [id:da47948513144361793] 
\draw    (475.67,91.67) -- (475.67,46.67) ;
%Straight Lines [id:da21436378809938206] 
\draw    (402,92.33) -- (377.33,92.5) ;
%Straight Lines [id:da9202454871448351] 
\draw    (425.5,91.33) -- (400.83,91.5) ;
%Straight Lines [id:da9178276320509453] 
\draw    (451,91.83) -- (426.33,92) ;
%Straight Lines [id:da005836844497847382] 
\draw    (475,91.33) -- (450.33,91.5) ;
%Straight Lines [id:da9900270807454029] 
\draw    (352.27,47.05) -- (376.93,46.99) ;
%Straight Lines [id:da3158252753142138] 
\draw    (376.93,46.99) -- (401.6,46.93) ;
%Straight Lines [id:da5097440674760058] 
\draw    (376.93,46.99) -- (401.6,46.93) ;
%Straight Lines [id:da5913313110971785] 
\draw    (352.93,47.39) -- (377.6,47.33) ;
%Curve Lines [id:da4310904623630387] 
\draw    (352.93,47.39) .. controls (353.6,46.72) and (387.67,30.33) .. (402.27,46.55) ;
%Curve Lines [id:da48418206106301953] 
\draw    (352.93,47.39) .. controls (356.33,37) and (413,24.33) .. (426.93,46.49) ;
%Curve Lines [id:da7903728336050316] 
\draw    (352.93,47.39) .. controls (359,33.67) and (423.67,13) .. (451.6,46.43) ;
%Curve Lines [id:da4890693186707278] 
\draw    (352.93,47.39) .. controls (357.67,25) and (439.67,11.67) .. (475.67,46.67) ;
%Straight Lines [id:da2722731767381704] 
\draw  [dash pattern={on 0.84pt off 2.51pt}]  (481,23.67) -- (458.03,27.61) ;
%Straight Lines [id:da37243977244194004] 
\draw    (402.27,46.55) -- (426.93,46.49) ;
%Straight Lines [id:da9806772468455298] 
\draw    (426.93,46.49) -- (451.6,46.43) ;
%Straight Lines [id:da07422205402322402] 
\draw    (426.93,46.49) -- (451.6,46.43) ;
%Straight Lines [id:da8348888718309961] 
\draw    (402.93,46.89) -- (427.6,46.83) ;
%Straight Lines [id:da4414592985523118] 
\draw    (450.27,46.55) -- (474.93,46.49) ;
%Straight Lines [id:da8699911773749033] 
\draw    (450.93,46.89) -- (475.6,46.83) ;
%Straight Lines [id:da24181717855939333] 
\draw    (316.77,47.55) -- (350.91,47.49) ;
%Straight Lines [id:da7186222002627661] 
\draw    (317.69,47.89) -- (351.83,47.83) ;
%Curve Lines [id:da9985287736925922] 
\draw    (317.69,47.89) .. controls (318.61,47.22) and (333.9,37.77) .. (351.83,47.83) ;
%Curve Lines [id:da6199623154788488] 
\draw    (317.69,47.89) .. controls (316.56,43.72) and (338.11,27.79) .. (351.83,47.83) ;
%Curve Lines [id:da2582278415525656] 
\draw    (317.69,47.89) .. controls (314.5,41.21) and (340.24,19.29) .. (351.83,47.83) ;
%Curve Lines [id:da5094709837617235] 
\draw    (317.69,47.89) .. controls (314.5,41.21) and (336.85,7.78) .. (351.83,47.83) ;
%Straight Lines [id:da9813688077503328] 
\draw  [dash pattern={on 0.84pt off 2.51pt}]  (334.04,14.77) -- (333.98,24.77) ;
%Straight Lines [id:da5608608679665656] 
\draw    (316.33,92) -- (352.33,92.5) ;

\end{tikzpicture}

        \caption{Graph where $f_E$ is injective, but $f$ is not.}

        \label{fig3}
    \end{figure}
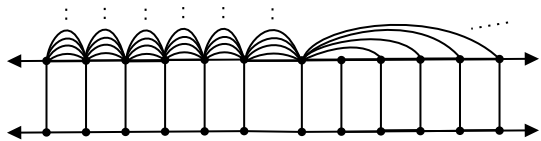
\end{ex}

Before presenting the main theorem, we state the well known Star-Comb Lemma, which will be used throughout its proof.

\begin{lemma}[Star-Comb lemma]
    Let $U$ be an infinite set of vertices in a connected graph $G$. Then $G$ contains either a comb attached to $U$ or a star attached to $U$.
\end{lemma}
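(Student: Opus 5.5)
We prove the Star-Comb Lemma in its standard form: a comb attached to $U$ is a ray together with infinitely many disjoint finite paths from the ray to distinct vertices of $U$ (teeth in $U$); a star attached to $U$ is a subdivided infinite star whose leaves lie in $U$. The plan is the usual tree argument. Since $G$ is connected, we first build a tree that spans enough of $U$, and then apply König-type reasoning to it.

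Pick a countably infinite subset $U'=\{u_0,u_1,\dots\}\subseteq U$. Build a tree $T\subseteq G$ inductively. Put $T_0=\{u_0\}$. Given the finite tree $T_n$, if $u_{n+1}\notin T_n$, choose a $u_{n+1}$--$T_n$ path $P$ in $G$; it exists by connectedness and meets $T_n$ only at its last vertex. Set $T_{n+1}=T_n\cup P$. The union $T=\bigcup_n T_n$ is a tree, since each step attaches a path at a single vertex, and it contains the infinite set $U'$.

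Next we split according to the degrees in $T$.

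\emph{Case 1: $T$ has a vertex $x$ of infinite degree.} For each of the infinitely many neighbours $y$ of $x$ in $T$, the component of $T-x$ containing $y$ contains some vertex of $U'$. This holds because that branch was created when some $u_{m}$ was attached, and the attaching path ends in $u_m$ or its extension leads back to one. To make this rigorous, note that in our construction every leaf of $T$ lies in $U'$, since each path's first vertex is some $u_m$ and later paths only attach at their ends. So every branch, being a subtree, contains a leaf or a ray. If a branch is infinite without leaves, then it contains a ray, which is fine too, since we only need some vertex of $U'$: every vertex of $T$ lies on some path from $u_m$ to $T_{m-1}$, so walking away from $x$ along the path containing it reaches $u_m$. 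For each branch, take the $x$--$U'$ path inside it. These paths are disjoint except at $x$, which gives a star attached to $U$.

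\emph{Case 2: $T$ is locally finite.} Since $T$ is infinite, König's lemma gives a ray $R\subseteq T$ starting at $u_0$. If infinitely many vertices of $U'$ lie on $R$, we have a comb with trivial teeth. Otherwise, the infinitely many vertices of $U'$ off $R$ lie in components of $T-R$. Each component attaches to $R$ at a single vertex, and by local finiteness each vertex of $R$ carries only finitely many components. The components are finite or infinite, and we want infinitely many distinct attachment points that each carry a $U'$-vertex.

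This is the main obstacle: ensuring infinitely many attachment points rather than infinitely many $U'$-vertices hanging off one point. We handle it by choosing $R$ carefully. Use König's lemma on the subtree of vertices $t$ such that the subtree above $t$ (rooted at $u_0$) contains infinitely many vertices of $U'$. This subtree is a locally finite, infinite, rooted tree in which every vertex has a child with the same property, so it contains a ray $R$ whose vertices all have infinitely many $U'$-vertices above them. Then, for infinitely many $n$, the subtree above $R$'s $n$-th vertex minus the one above its $(n+1)$-th vertex contains a vertex of $U'$; otherwise a tail of $R$ would carry all remaining $U'$-vertices, and we get those infinitely many on $R$ itself anyway. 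Choosing such $U'$-vertices together with their tree paths back to $R$ gives disjoint teeth, and hence a comb attached to $U$.
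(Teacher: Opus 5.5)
The paper gives no proof of this lemma; it quotes it as the well-known Star--Comb Lemma (Diestel, \emph{Graph Theory}, Lemma 8.2.2). Your argument is essentially that standard proof, and it is correct. You grow a tree $T$ as a union of $u_m$--$T_{m-1}$ paths $P_m$ and take a star at a vertex of infinite degree. Otherwise you follow a ray $R=r_0r_1\dots$ of vertices each having infinitely many $U'$-vertices above them, and hang teeth off the disjoint sets $D_n=\lfloor r_n\rfloor\setminus\lfloor r_{n+1}\rfloor$, where $\lfloor t\rfloor$ is the set of vertices above $t$ in the tree-order rooted at $u_0$. Two of your justifications are wrong as written, although the claims they support are true.

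In Case 1, the claim ``walking away from $x$ along the path containing it reaches $u_m$'' fails when $x$ lies on the segment of $P_m$ between that vertex and $u_m$. In that case walking away from $x$ ends at the attachment vertex in $T_{m-1}$. You then need three extra facts: $x\notin T_{m-1}$ (it is a new vertex of $P_m$), $T_{m-1}$ is connected, and $u_0\in T_{m-1}$. A cleaner route is this: every vertex $z$ introduced by $P_m$ satisfies $u_m\in\lfloor z\rfloor$. Moreover, every component of $T-x$ other than the one containing $u_0$ equals $\lfloor y\rfloor$ for a child $y$ of $x$. This also makes your leaf/ray split unnecessary.

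In Case 2, the ``otherwise'' alternative is not that the remaining $U'$-vertices sit on a tail of $R$. Since $r_n\in D_n$, any such vertex already lies in some $D_n$. The real reason infinitely many $D_n$ meet $U'$ is that no vertex of $T$ lies above every $r_n$, because each vertex has finite height. Hence $\lfloor r_k\rfloor$ is the disjoint union of the $D_n$ with $n\ge k$. If $D_n\cap U'=\emptyset$ for all $n\ge k$, then $r_k$ would have no $U'$-vertex above it, contradicting the choice of $R$.

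You can also delete your first attempt in Case 2 using an arbitrary K\"onig ray, since the careful choice of $R$ supersedes it.
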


\begin{thm}\label{theo1}
    Let $G$ be an infinite graph. The following statements are equivalent:
    \begin{enumerate}
        \item There exists a well-defined and injective function $f_E:\Omega'(G)\longrightarrow \Omega_E(G)$ such that the diagram below commutes.

        \begin{figure}[ht]
            \centering

\tikzset{every picture/.style={line width=0.75pt}}   

\begin{tikzpicture}[x=0.6pt,y=0.6pt,yscale=-1,xscale=1]

%Straight Lines [id:da1871348557637702] 
\draw    (260.5,54.6) -- (306.9,89.4) ;
\draw [shift={(308.5,90.6)}, rotate = 216.87] [color={rgb, 255:red, 0; green, 0; blue, 0 }  ][line width=0.75]    (10.93,-3.29) .. controls (6.95,-1.4) and (3.31,-0.3) .. (0,0) .. controls (3.31,0.3) and (6.95,1.4) .. (10.93,3.29)   ;
%Straight Lines [id:da8279285634918194] 
\draw    (280.5,36.6) -- (382.5,35.62) ;
\draw [shift={(384.5,35.6)}, rotate = 179.45] [color={rgb, 255:red, 0; green, 0; blue, 0 }  ][line width=0.75]    (10.93,-3.29) .. controls (6.95,-1.4) and (3.31,-0.3) .. (0,0) .. controls (3.31,0.3) and (6.95,1.4) .. (10.93,3.29)   ;
%Straight Lines [id:da9966586488343272] 
\draw    (362.5,92.6) -- (409.95,53.86) ;
\draw [shift={(411.5,52.6)}, rotate = 140.77] [color={rgb, 255:red, 0; green, 0; blue, 0 }  ][line width=0.75]    (10.93,-3.29) .. controls (6.95,-1.4) and (3.31,-0.3) .. (0,0) .. controls (3.31,0.3) and (6.95,1.4) .. (10.93,3.29)   ;
%Curve Lines [id:da47978489430037397] 
\draw    (322.5,47.6) .. controls (290.82,90.17) and (364.01,87.66) .. (341.22,48.79) ;
\draw [shift={(340.5,47.6)}, rotate = 57.99] [color={rgb, 255:red, 0; green, 0; blue, 0 }  ][line width=0.75]    (10.93,-3.29) .. controls (6.95,-1.4) and (3.31,-0.3) .. (0,0) .. controls (3.31,0.3) and (6.95,1.4) .. (10.93,3.29)   ;

% Text Node
\draw (317,84.4) node [anchor=north west][inner sep=0.75pt]    {$\Omega ( G)$};
% Text Node
\draw (231,28.4) node [anchor=north west][inner sep=0.75pt]    {$\Omega '( G)$};
% Text Node
\draw (394,29.4) node [anchor=north west][inner sep=0.75pt]    {$\Omega _{E}( G)$};
% Text Node
\draw (317,11.4) node [anchor=north west][inner sep=0.75pt]    {$f_{E}$};
% Text Node
\draw (258,69.4) node [anchor=north west][inner sep=0.75pt]    {$\phi$};
% Text Node
\draw (393,68.4) node [anchor=north west][inner sep=0.75pt]    {$f $};

\end{tikzpicture}
        \end{figure}

\item $G$ is end-correlated

\item For every finite set $F\subset V(G)$ and every vertex $v\in V(G)$, there is at most one connected component $C$ of $G\setminus F$ such that the edge class $\mathcal{C}_v\cap C$ contains a non-dominated ray.

    \end{enumerate}
\end{thm}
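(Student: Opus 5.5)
We prove $(1)\Leftrightarrow(2)$ directly from the theorem of Diestel and Kühn, and then $(2)\Rightarrow(3)$ and $(3)\Rightarrow(2)$ by contraposition.

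\emph{$(1)\Leftrightarrow(2)$.} Diagram commutativity forces $f_E=f\circ\phi$. By \cite{topologicalendsversuscombinatorialends}, $\phi$ is a bijection from $\Omega'(G)$ onto the set of non-dominated ends of $G$. Hence $f_E$ is injective if and only if $f$ is injective on the non-dominated ends. That is, whenever $r,s$ are non-dominated rays with $[r]_E=[s]_E$, we must have $[r]=[s]$. This is verbatim the definition of end-correlated, so this equivalence is essentially bookkeeping.

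\emph{$(2)\Rightarrow(3)$.} Suppose $F\subset V(G)$ is finite, $v\in V(G)$, and $C_1\neq C_2$ are components of $G\setminus F$. Suppose also that non-dominated rays $r_i\subset C_i$ pass through infinitely many vertices of $\mathcal{C}_v$. We first show that $v$ edge-dominates each $r_i$. Let $X\subset E(G)$ be finite, and take a tail of $r_i$ avoiding all endpoints of edges in $X$. This tail contains some $u\in\mathcal{C}_v$, and $u\sim_E v$ puts $u$ in the component of $v$ in $G\setminus X$. Hence $r_1\sim_E r_2$. On the other hand, $F$ separates tails of $r_1$ and $r_2$, so $r_1\not\sim r_2$. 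This contradicts end-correlation.

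\emph{$(3)\Rightarrow(2)$.} Suppose $r,s$ are non-dominated, $r\sim_E s$ and $r\not\sim s$. Pick a finite $F$ separating tails of $r$ and $s$ into distinct components $C_1\ni r$ and $C_2\ni s$. Since $r\sim_E s$, there are infinitely many pairwise edge-disjoint $r$--$s$ paths: inductively, remove the finitely many edges used so far and reconnect the tails. Each such path meets $F$, and $F$ is finite, so some $x\in F$ lies on infinitely many of them. This gives infinitely many edge-disjoint paths from $x$ into tails of $r$ inside $C_1\cup\{x\}$, and likewise into tails of $s$ inside $C_2\cup\{x\}$. The goal is to produce, in each $C_i$, a non-dominated ray meeting $\mathcal{C}_x$ infinitely often; this contradicts $(3)$ with $v=x$.

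\emph{Main obstacle.} The hard step is showing that the class $\mathcal{C}_x$ really reaches into $C_1$ along a non-dominated ray. I would try the following.

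First, let $U\subset C_1$ consist of the endpoints on $r$ of the edge-disjoint $x$--$r$ paths. Suppose only finitely many vertices of $r$ lie in $\mathcal{C}_x$. Use the edge-disjoint paths together with Menger's theorem (edge version) to find a vertex on $r$ that is infinitely edge-connected to $x$. Failing that, apply the Star-Comb Lemma to $U$ in the subgraph $H\subseteq C_1\cup\{x\}$ formed by a tail of $r$ and the paths.

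If the Star-Comb Lemma yields a star, its centre $c$ would either dominate $r$ or be joined to $x$ by infinitely many edge-disjoint paths. Domination of $r$ is excluded because $r$ is non-dominated. In the second case, the edge-connectivity between $c$ and $x$ lets us replace $x$ by $c$ and iterate.

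If it yields a comb, then since the paths to the teeth are edge-disjoint and share $x$, each tooth, and hence infinitely many spine vertices, is infinitely edge-connected to $x$. The spine is then equivalent to $r$ and is therefore non-dominated.

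Doing the same on the $s$ side gives the second component, and the contradiction with $(3)$ follows. I expect the case analysis in this step, that is, ensuring edge-equivalence to $x$ of infinitely many vertices on a ray equivalent to $r$ while using non-domination of $r$ to rule out stars, to require the most care.
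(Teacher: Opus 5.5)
\textbf{Overall.} Your $(1)\Leftrightarrow(2)$ and $(2)\Rightarrow(3)$ are correct and agree with the paper. Your $(2)\Rightarrow(3)$ is in fact more careful than the paper's, since you pass to tails avoiding the endpoints of $X$. The gap is in $(3)\Rightarrow(2)$, at exactly the step you flag.

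\textbf{Where $(3)\Rightarrow(2)$ fails.} In the comb case you assert that ``each tooth, and hence infinitely many spine vertices, is infinitely edge-connected to $x$''. This does not follow.
\begin{itemize}
\item Each tooth is reached from $x$ by only one path of your family. Edge-disjointness of the family says nothing about the edge-connectivity between $x$ and a single tooth.
\item Even if it did, it would not pass to the spine vertices, which are joined to the teeth by single paths.
\end{itemize}
The teeth lie in $U\subseteq V(r)$, and $r$ may contain no vertex of $\mathcal{C}_x$ at all. Take the graph of Figure 1, with $F=\{a,b\}$ a vertical rung, $r$ and $s$ the two halves of the bottom double ray, and $x=a$. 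This is forced by your pigeonhole, since $b$ has finite degree. Then $x$ edge-dominates $r$ and $r$ is non-dominated. However, every bottom vertex has finite degree, so none is edge-equivalent to $x$. Consequently:
\begin{itemize}
\item the Menger step cannot succeed;
\item the teeth are never in $\mathcal{C}_x$;
\item nothing forces the spine through $\mathcal{C}_x$, because the tail of $r$ is itself a comb in $H$ attached to $U$.
\end{itemize}
The ray you actually need here is the top ray. The star case, by contrast, needs no iteration: a star attached to $U\subseteq V(r)$ means its centre dominates $r$.

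\textbf{The missing idea.} You need to locate vertices of $\mathcal{C}_x$ converging to the end of $r$ using \emph{vertex} separators, which is what the paper does.
\begin{enumerate}
\item Since $r$ is non-dominated, choose finite sets $V_1=F,V_2,\dots$ with $V_{n+1}$ separating $V_n$ from a tail of $r$ and $\bigcap_n C_n=\emptyset$. Here $C_n$ is the component of $G\setminus V_n$ containing a tail of $r$.
\item Each $V_n$ separates $r$ from $s$. Since $r\sim_E s$, some $a_n\in V_n$ edge-dominates $r$. Otherwise, cutting each vertex of $V_n$ off from $r$ by finitely many edges would leave tails of $r$ and $s$ in a common component avoiding $V_n$.
\item Vertices edge-dominating the same ray are edge-equivalent, so all $a_n$ lie in one class $\mathcal{C}$.
\item Join the $a_n$ by paths avoiding the vertices of the $V_j$ that lie outside $\mathcal{C}$. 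This is possible because those vertices can be cut off from $\mathcal{C}$ by finitely many edges.
\item Apply the Star-Comb Lemma to $\{a_n\}$ in the union $Z$ of these paths. Stars are excluded by the finite separators $V_k$. The spine is equivalent to $r$, hence non-dominated. It crosses infinitely many $V_j$ inside $Z$, hence meets $\mathcal{C}\cap C_1$ infinitely often.
\item The same argument on the side of $s$ contradicts (3).
\end{enumerate}

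\textbf{A smaller point.} Pigeonholing $x\in F$ on infinitely many $r$--$s$ paths does not give $x$--$r$ paths inside $C_1\cup\{x\}$, because the $r$-side segment may meet other vertices of $F$. Instead, take the first vertex of $F$ on the $r$ side. Then use that, when $r\sim_E s$, a vertex edge-dominates $r$ if and only if it edge-dominates $s$.
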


\begin{proof}

$3\Rightarrow 2:$ Suppose, towards a contradiction, that there are two rays $r, s\in \mathcal{R}(G)$ that are non-dominated and not vertex-equivalent but are edge-equivalent.

Since $r$ and $s$ are not vertex-equivalent, there exists a finite set $V_1\subset V(G)$ which separates $r$ from $s$. As $r$ and $s$ are edge-equivalent, let $S_1\subset V_1$ be the set of all vertices that edge-dominates $r$ and let $L_1\subset V_1$ be the set of all vertices that edge-dominates $s$.

\begin{claim}
    $L_1\cap S_1\neq \emptyset$.
\end{claim}

\begin{proof}
    Suppose, towards a contradiction, that $L_1\cap S_1=\emptyset$. Given $u\in V_1$, then $u$ edge-dominates at most one of the rays $r$ and $s$.
  Let $A_r=V_1\setminus S_1$ and $A_s~=~V_1\setminus L_1$. For each $u\in A_r$, consider $E_u$ a finite set of edges that separates $u$ from $r$. For each $u\in A_s$, consider $F_u$ the finite set of edges that separates $u$ from $s$. Then, let $F=(\bigcup_{u\in A_r} E_u)\cup(\bigcup_{u\in A_s} F_u)$.

    Since $F$ is a finite set of edges and $r\sim_E s$, there exists a path $P$ in $G\setminus F$ connecting $r$ to $s$. As $V_1$ separates $r$ and $s$, we have $P$ passes through $V_1$, but then $P$ must pass through $L_1$, because $F$ separates $s$ from $A_s=V_1\setminus L_1$. However, there is no path in $G\setminus F$ from $L_1$ to $r$, since $L_1\subset A_r=V_1\setminus S_1$. Therefore, $P$ cannot exist, which leads to a contradiction because $r\sim_E s$.
    
\end{proof}

Consider, then, $A=S_1\cap L_1\neq\emptyset$. Thus, if $v\in A$, $v$ edge-dominates both $r$ and $s$. Since $r$ and $s$ are non-dominated, there exists a finite set of vertices $V_2\subset V(G)$ that separates $V_1$ from $r$. As $V_2$ separates $V_1$ from $r$, it also separates $A$ from $r$. Furthermore, $V_2$ separates $r$ from $s$. Thus, since $r$ and $s$ are edge-equivalent, the set $A_2\subset V_2$ of all vertices that edge-dominates $r$ is non-empty.

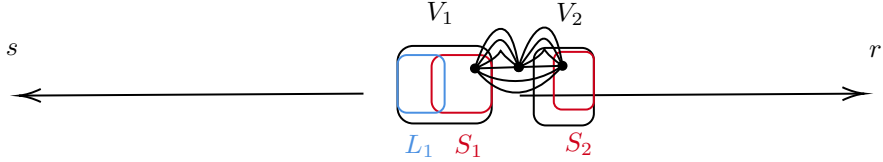
\begin{figure}[ht]
    \centering

\tikzset{every picture/.style={line width=0.75pt}}   

\begin{tikzpicture}[x=0.75pt,y=0.75pt,yscale=-1,xscale=1]

%Rounded Rect [id:dp6909567257592978] 
\draw   (306.67,39.47) .. controls (306.67,35.16) and (310.16,31.67) .. (314.47,31.67) -- (345.87,31.67) .. controls (350.17,31.67) and (353.67,35.16) .. (353.67,39.47) -- (353.67,62.87) .. controls (353.67,67.17) and (350.17,70.67) .. (345.87,70.67) -- (314.47,70.67) .. controls (310.16,70.67) and (306.67,67.17) .. (306.67,62.87) -- cycle ;
%Straight Lines [id:da34952121701253014] 
\draw    (367.67,56.67) -- (537.67,55.68) ;
\draw [shift={(539.67,55.67)}, rotate = 179.67] [color={rgb, 255:red, 0; green, 0; blue, 0 }  ][line width=0.75]    (10.93,-3.29) .. controls (6.95,-1.4) and (3.31,-0.3) .. (0,0) .. controls (3.31,0.3) and (6.95,1.4) .. (10.93,3.29)   ;
%Straight Lines [id:da7267304633926029] 
\draw    (289.67,55.67) -- (119.67,56.66) ;
\draw [shift={(117.67,56.67)}, rotate = 359.67] [color={rgb, 255:red, 0; green, 0; blue, 0 }  ][line width=0.75]    (10.93,-3.29) .. controls (6.95,-1.4) and (3.31,-0.3) .. (0,0) .. controls (3.31,0.3) and (6.95,1.4) .. (10.93,3.29)   ;
%Rounded Rect [id:dp056155344284040454] 
\draw  [color={rgb, 255:red, 208; green, 2; blue, 27 }  ,draw opacity=1 ] (323.67,42.07) .. controls (323.67,38.86) and (326.26,36.27) .. (329.47,36.27) -- (347.87,36.27) .. controls (351.07,36.27) and (353.67,38.86) .. (353.67,42.07) -- (353.67,59.47) .. controls (353.67,62.67) and (351.07,65.27) .. (347.87,65.27) -- (329.47,65.27) .. controls (326.26,65.27) and (323.67,62.67) .. (323.67,59.47) -- cycle ;
%Rounded Rect [id:dp3975096783349634] 
\draw  [color={rgb, 255:red, 74; green, 144; blue, 226 }  ,draw opacity=1 ] (306.67,40.97) .. controls (306.67,38.37) and (308.77,36.27) .. (311.37,36.27) -- (325.47,36.27) .. controls (328.06,36.27) and (330.17,38.37) .. (330.17,40.97) -- (330.17,60.57) .. controls (330.17,63.16) and (328.06,65.27) .. (325.47,65.27) -- (311.37,65.27) .. controls (308.77,65.27) and (306.67,63.16) .. (306.67,60.57) -- cycle ;
%Rounded Rect [id:dp17469129538708283] 
\draw   (374.67,38.67) .. controls (374.67,35.35) and (377.35,32.67) .. (380.67,32.67) -- (398.67,32.67) .. controls (401.98,32.67) and (404.67,35.35) .. (404.67,38.67) -- (404.67,65.67) .. controls (404.67,68.98) and (401.98,71.67) .. (398.67,71.67) -- (380.67,71.67) .. controls (377.35,71.67) and (374.67,68.98) .. (374.67,65.67) -- cycle ;
%Rounded Rect [id:dp00819889784935568] 
\draw  [color={rgb, 255:red, 208; green, 2; blue, 27 }  ,draw opacity=1 ] (384.67,38.67) .. controls (384.67,36.46) and (386.46,34.67) .. (388.67,34.67) -- (400.67,34.67) .. controls (402.88,34.67) and (404.67,36.46) .. (404.67,38.67) -- (404.67,59.67) .. controls (404.67,61.88) and (402.88,63.67) .. (400.67,63.67) -- (388.67,63.67) .. controls (386.46,63.67) and (384.67,61.88) .. (384.67,59.67) -- cycle ;
%Straight Lines [id:da8283995617074176] 
\draw    (345.33,43) -- (367.17,42.33) ;
\draw [shift={(367.17,42.33)}, rotate = 358.25] [color={rgb, 255:red, 0; green, 0; blue, 0 }  ][fill={rgb, 255:red, 0; green, 0; blue, 0 }  ][line width=0.75]      (0, 0) circle [x radius= 2.01, y radius= 2.01]   ;
\draw [shift={(345.33,43)}, rotate = 358.25] [color={rgb, 255:red, 0; green, 0; blue, 0 }  ][fill={rgb, 255:red, 0; green, 0; blue, 0 }  ][line width=0.75]      (0, 0) circle [x radius= 2.01, y radius= 2.01]   ;
%Straight Lines [id:da5894028319319387] 
\draw    (367.17,42.33) -- (389,41.67) ;
\draw [shift={(389,41.67)}, rotate = 358.25] [color={rgb, 255:red, 0; green, 0; blue, 0 }  ][fill={rgb, 255:red, 0; green, 0; blue, 0 }  ][line width=0.75]      (0, 0) circle [x radius= 2.01, y radius= 2.01]   ;
\draw [shift={(367.17,42.33)}, rotate = 358.25] [color={rgb, 255:red, 0; green, 0; blue, 0 }  ][fill={rgb, 255:red, 0; green, 0; blue, 0 }  ][line width=0.75]      (0, 0) circle [x radius= 2.01, y radius= 2.01]   ;
%Curve Lines [id:da32497519814375797] 
\draw    (345.33,43) .. controls (368.33,33) and (349,29.67) .. (367.17,42.33) ;
%Curve Lines [id:da39795645682649816] 
\draw    (345.33,43) .. controls (348.33,47) and (369,55.67) .. (389,41.67) ;
%Curve Lines [id:da775191273984541] 
\draw    (345.33,43) .. controls (348.33,47) and (365.67,69) .. (389,41.67) ;
%Curve Lines [id:da3687046106948798] 
\draw    (345.33,43) .. controls (357,29) and (357,19.67) .. (367.17,42.33) ;
%Curve Lines [id:da39741952515258694] 
\draw    (345.33,43) .. controls (351,23.67) and (362.33,10.33) .. (367.17,42.33) ;
%Curve Lines [id:da8104238606682849] 
\draw    (367.17,42.33) .. controls (390.17,32.33) and (370.83,29) .. (389,41.67) ;
%Curve Lines [id:da39774317181443275] 
\draw    (367.17,42.33) .. controls (378.83,28.33) and (378.83,19) .. (389,41.67) ;
%Curve Lines [id:da613894344999479] 
\draw    (367.17,42.33) .. controls (372.83,23) and (384.17,9.67) .. (389,41.67) ;

% Text Node
\draw (319.67,8.07) node [anchor=north west][inner sep=0.75pt]    {$V_{1}$};
% Text Node
\draw (540.67,30.07) node [anchor=north west][inner sep=0.75pt]    {$r$};
% Text Node
\draw (109.67,29.07) node [anchor=north west][inner sep=0.75pt]    {$s$};
% Text Node
\draw (333.67,75.07) node [anchor=north west][inner sep=0.75pt]  [color={rgb, 255:red, 208; green, 2; blue, 27 }  ,opacity=1 ]  {$S_{1}$};
% Text Node
\draw (308.67,75.07) node [anchor=north west][inner sep=0.75pt]  [color={rgb, 255:red, 74; green, 144; blue, 226 }  ,opacity=1 ]  {$L_{1}$};
% Text Node
\draw (384.33,8.4) node [anchor=north west][inner sep=0.75pt]    {$V_{2}$};
% Text Node
\draw (388.67,74.07) node [anchor=north west][inner sep=0.75pt]  [color={rgb, 255:red, 208; green, 2; blue, 27 }  ,opacity=1 ]  {$S_{2}$};

\end{tikzpicture}
    \caption{Representation}
    \label{fig:enter-label}
\end{figure}
\begin{claim}\label{claim1}
 If $w\in A$ and $u\in A_2$, then $w\sim_E u$.

\end{claim}

\begin{proof}
Indeed, since $w\in A$ and $u\in A_2$, then $w$  and $u$ edge-dominates $r$. Let $F\subset E(G)$ be a finite set and let $\overline{r}$ be a tail of $r$ in $G\setminus F$. Then $w$ and $u$ are in the same connected component of $\overline{r}$ of the graph $G\setminus F$.
  
\end{proof}

Inductively, for every $n\in \mathbb{N}$, $V_n\subset V(G)$ does non-dominate $r$, since $V_n$ is finite and $r$ is non-dominated. Therefore, there exists a finite set $V_{n+1}\subset V(G)$ that separates $V_n$ from $r$. Let $A_{n+1}$ be the subset of $V_{n+1}$ that edge-dominates $r$. Using the same argument from Claim \ref{claim1}, we can obtain that each vertex of $A_{n+1}$ is edge-equivalent with every vertex of $A_n\subset V_n$.

Thus, we construct an infinite family of finite sets of vertices $\lbrace A_1, A_2, A_3, \dots , A_n, \dots \rbrace$, with $A=A_1$, such that $A_i\cap A_j=\emptyset$ and each vertex $u\in \bigcup_{i\in\mathbb{N}} A_i$ edge-dominates $r$. Therefore, given distinct $i,j\in \mathbb{N}$, $u_i\in A_i$ and $u_j\in A_j$, then $u_i\sim_E u_j$.

\begin{claim}
    There exists a ray $t$ in $G$ such that $t\sim r$ and $t\cap A_n\neq \emptyset$ for infinitely many $n\in\mathbb{N}$.
 
\end{claim}
\begin{proof}
For each $n\in\mathbb{N}$, consider $C_n$ to be the connected component of $G\setminus V_n$ that contains $r$. Note that, if $V_{n+1}$ separates $V_n$ from $r$, then $V_{n+1}\subset V(C_n)$. Since $r$ is a non-dominated end, $r$ determines a topological end, and therefore $\bigcap_{n\in\mathbb{N}} C_n=\emptyset$.

Let $v_1\in A_1$, $v_3\in A_3$, and let $P_1$ be a path connecting $v_1$ to $v_3$ without passing through $(V_1\setminus A_1)\cup (V_2\setminus A_2)$. It is possible to find such a path because $(V_1\setminus A_1)\cup (V_2\setminus A_2)$ is not edge-equivalent to $v_1$ and $v_3$. Note that $P_1$ must pass through some vertex $v_2\in A_2$, since $V_2$ separates $V_1$ from $V_3$. Consider $n_1=1$, $n_2=3$, and let $n_3$ be the successor of the smallest natural number $n$ such that $P_1\cap C_{n}=\emptyset$. Let $v_{n_3}\in A_{n_3}$. Consider $P_2$ to be a path connecting $v_3$ to $v_{n_3}$ without passing through $\bigcup_{j=1}^{n_3}(V_j\setminus A_j)$. Note that $P_2$ must pass through some vertex $v_{n_3-1}\in A_{n_3-1}$, since $V_{n_3-1}$ separates $V_3$ from $V_{n_3}$. Furthermore, it is possible to obtain $P_2\subset C_3$.

Suppose inductively that we have constructed paths $P_1, P_2, \dots, P_k$, where $P_i$ connects the vertex $v_{n_i}\in A_{n_i}$ to $v_{n_{i+1}}\in A_{n_{i+1}}$, passes through some vertex $v_{n_{i+1}-1}\in A_{n_{i+1}-1}$, and is contained in $C_{n_i}$. Then, consider $n_{k+2}$ to be the successor of the smallest natural number $n$ such that $P_k\cap C_{n}=\emptyset$. Let $v_{n_{k+2}}\in A_{n_{k+2}}$. Consider $P_{k+1}$ a path connecting $v_{n_{k+1}}$ to $v_{n_{k+2}}$ without passing through $\bigcup_{j=1}^{n_{k+2}}(V_j\setminus A_j)$. Note that $P_{k+1}$ must pass through some vertex $v_{n_{k+2}-1}\in A_{n_{k+2}-1}$, since $V_{n_{k+2}-1}$ separates $V_{n_{k+1}}$ from $V_{n_{k+2}}$. Furthermore, it is possible to obtain $P_{k+1}\subset C_{n_{k+1}}$.

    Since $Z=\bigcup_{n\in\mathbb{N}} P_n$ is a connected infinite graph and $U=\bigcup_{n\in\mathbb{N}} Z\cap A_n$ is infinite, then, by the Star-comb Lemma, there exists a star in $Z$ with leaves in $U$ or a comb in $Z$ with teeth in $U$. Suppose it is a star with center $u$ and leaves in $U$. Then, since $\bigcap_{n\in\mathbb{N}} C_n=\emptyset$ and $C_{n+1}\subset C_n$, there exists some $k\in\mathbb{N}$ such that $u\notin C_n$ for all $n\geq k$. Since $V_{k+1}$ separates $u$ from $C_{n}$ for all $n\geq k+1$, all paths between $u$ and $U$ must pass through $V_{k+1}$. However, $V_{k+1}$ is finite, and therefore, $u$ cannot be the center of a star with leaves in $U$. Thus, there must be a comb $P$ in $Z$ with spine $t$ and teeth in $U$. Since the paths $P_n$ are such that $P_n \cap V_{n_{k+1}-1}\subset A_{n_{k+1}-1}$, we have $t\cap A_n\neq \emptyset$ for infinitely many  $n\in\mathbb{N}$.

Finally, note that $t\sim r$. Indeed, consider $Q_1$ a path that connects $v_1$ to $r$. Let $n_1\in\mathbb{N}$ be the largest natural number such that $Q_1\cap V_n\neq \emptyset$. Then $V_{n_1+1}$ separates $r$ from $Q_1$. Let $u_2\in t\cap A_{n_2}$. Consider a path $Q_2$ from $u_2$ to $r$ in $G\setminus (\bigcup_{i=1}^{n_1+1} V_{i})$. Then, $Q_1\cap Q_2=\emptyset$. Let $n_2\in\mathbb{N}$ be the largest natural number such that $Q_2\cap V_n\neq \emptyset$. Suppose we have constructed $Q_1, Q_2, \dots Q_k$ such that $Q_i\cap Q_j=\emptyset$ for all $i,j\in \lbrace 1, 2, \dots , k\rbrace$ with $i\neq j$ and that each $Q_i$ connects $t$ to $r$. We will construct a path $Q_{k+1}$ which does not intersect the previous $Q_i$'s and connects $t$ to $r$. Let $n_k\in \mathbb{N}$ be the largest natural number such that $Q_i\cap V_n\neq \emptyset$ for some $i\in \lbrace 1, 2, \dots , k\rbrace$. Then $V_{n_k+1}$ separates $r$ from $Q_k, Q_{k-1}, \dots , Q_1$. Let $u_{k+1}\in t\cap A_{n_{k+1}}$. Consider a path $Q_{k+1}$ from $u_{k+1}$ to $r$ in $G\setminus (\bigcup_{i=1}^{n_k+1} V_{i})$. Then, $Q_{k+1}\cap Q_i=\emptyset$ for all $i\in \lbrace 1, 2 \dots , k\rbrace$. Thus, $t\sim r$.
     
\end{proof}

Note that, using an analogous construction, we can obtain a ray $t'$ starting at $A$, which passes through infinitely many vertices of $\mathcal{C}_v$ and is equivalent to $s$. Since $s$ and $r$ are non-dominated, $t$ and $t'$ are also non-dominated. Thus, let $C$ and $\overline{C}$ be connected components of $G\setminus V_1$, where $r$ lives in $C$ and $s$ lives in $\overline{C}$. Then $\mathcal{C}_v\cap C$ contains the non-dominated ray $t$ and $\mathcal{C}_v\cap \overline{C}$ contains the non-dominated ray $t'$. This leads to a contradiction.

$2\Rightarrow 3:$ Suppose, towards a contradiction, that statement 2 holds, but there exists a finite set $F\subset V(G)$ and a vertex $v\in V(G)$ such that there are two connected components $C$ and $C'$ of $G\setminus F$ with $\mathcal{C}_v\cap C$ and $\mathcal{C}_v\cap C'$ containing a non-dominated ray. Let $t$ be the non-dominated ray in $\mathcal{C}_v\cap C$ and $t'$ be the non-dominated ray in $\mathcal{C}_v\cap C'$. Then $t$ and $t'$ are not vertex-equivalent. Since $t\cap (\mathcal{C}_v\cap C)\neq\emptyset$ and $t'\cap (\mathcal{C}_v\cap C')\neq\emptyset$, there exist $u\in t\cap \mathcal{C}_v$ and $w\in t'\cap \mathcal{C}_v$. Since $u,w\in \mathcal{C}_v$, then for every finite set $L\subset E(G)$, there exists a path $P$ in $G\setminus L$ connecting $u$ to $w$. Therefore, $P$ connects $t$ to $t'$ in $G\setminus L$. Thus, $t$ and $t'$ are edge-equivalent, which leads to a contradiction.

$1\Rightarrow 2:$ Let $r,s\in \mathcal{R}(G)$ be two non-dominated and not vertex-equivalent rays. Then there exist two ends $\epsilon, \sigma\in \Omega'(G)$ such that $\phi(\epsilon)=[r]\in \Omega(G)$ and $\phi(\sigma)=[s]\in\Omega(G)$. Since $r\nsim s$, then $[r]\neq [s]$. Therefore, $\epsilon\neq \sigma$. As $f_E$ is injective, we have $f_E(\epsilon)\neq f_E(\sigma)$. So, $f([r])\neq f([s])$. Since $f([r])=[r]_E$ and $f([s])=[s]_E$, we have $[r]_E\neq [s]_E$. Thus, $r\nsim_E s$.

$2\Rightarrow 1:$ Consider the function $f_E:\Omega'(G)\longrightarrow \Omega_E(G)$ given by $f_E(\epsilon)=f(\phi(\epsilon))$. Given $\epsilon, \sigma \in \Omega'(G)$ with $\epsilon\neq \sigma$, then $\phi(\epsilon)\neq \phi(\sigma)$. Hence, $\phi(\epsilon)$ and $\phi(\sigma)$ are non-dominated and distinct ends. Let $r\in \phi(\epsilon)$ and $s\in \phi(\sigma)$, then $r\nsim s$. Since $r$ and $s$ are non-dominated, then by 2 we have $r\nsim_E s$. Since $f (\phi(\epsilon))=f([r])=[r]_E$ and $f (\phi(\sigma))=f([s])=[s]_E$, we have $f_E(\epsilon)\neq f_E(\sigma)$.
 
\end{proof}

\begin{defn}
    Let $G$ be an infinite graph. We say that an edge-end $\varepsilon\in \Omega_E(G)$ is almost non-dominated if there exist at least one ray $r\in \varepsilon$ such that $r$ is non-dominated. 
\end{defn}

\begin{prop}\label{prop1}
    Let $G$ be an infinite graph. The topological ends of $G$ are exactly the almost non-dominated edge-ends of $G$, that is, the image set of $f_E=f\circ \phi: \Omega'(G)\longrightarrow \Omega_E(G)$ is exactly the subset of edge-ends that are almost non-dominated.
   
\end{prop}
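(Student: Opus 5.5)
We prove the two inclusions separately, relying only on the Diestel--Kühn theorem as quoted in the introduction: $\phi$ is an injection of $\Omega'(G)$ into $\Omega(G)$ whose image is exactly the set of non-dominated ends.

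\emph{Image is contained in the almost non-dominated edge-ends.} Take $\epsilon\in\Omega'(G)$. By Diestel--Kühn, $\phi(\epsilon)$ is a non-dominated end, so every ray $r\in\phi(\epsilon)$ is non-dominated. (Whether a ray is dominated is a property of its vertex-end: if $v$ dominates $r$ and $s\sim r$, the infinitely many $v$--$r$ paths extend to $s$.) Hence $f_E(\epsilon)=f(\phi(\epsilon))=[r]_E$ contains the non-dominated ray $r$, so it is almost non-dominated.

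\emph{Every almost non-dominated edge-end is in the image.} Let $\varepsilon\in\Omega_E(G)$ and choose a non-dominated ray $r\in\varepsilon$. Its end $[r]\in\Omega(G)$ is non-dominated, by the same remark that domination depends only on the vertex-end. By Diestel--Kühn, $[r]$ lies in the image of $\phi$, so $[r]=\phi(\epsilon)$ for some $\epsilon\in\Omega'(G)$. Then $f_E(\epsilon)=f([r])=[r]_E=\varepsilon$.

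The only point needing care is this invariance of domination under vertex-equivalence. Suppose $v$ dominates $r$ and $s\sim r$. Given any finite set of $v$--$r$ paths disjoint except at $v$, we extend one more disjoint path to $s$ as follows. Separate $v$ and the finitely many paths already built by removing a finite vertex set $X$ (excluding $v$). Since $r$ and $s$ have tails in the same component of $G\setminus X$, some $v$--$r$ path that avoids $X$ can be continued inside that component to reach $s$. Iterating this gives infinitely many $v$--$s$ paths disjoint except at $v$.

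Everything else is unwinding definitions. The proposition holds for every infinite graph, without assuming end-correlation. When Theorem \ref{theo1} applies, $f_E$ is moreover a bijection onto the almost non-dominated edge-ends.
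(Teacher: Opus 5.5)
Your proposal is correct and follows the same route as the paper. Both inclusions come from the Diestel--Kühn fact that $\phi$ maps $\Omega'(G)$ bijectively onto the non-dominated ends, followed by unwinding $f([r])=[r]_E$; your one addition is an explicit check that domination is invariant under vertex-equivalence, which the paper uses without comment (in your sketch, $X$ should consist of the vertices other than $v$ on the $v$--$s$ paths already built, and since only finitely many fan paths to $r$ meet $X$, one of the others ends on the tail of $r$ in the component of $G\setminus X$ containing a tail of $s$, so $v$ can be joined to $s$ inside that component).
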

\begin{proof}
Indeed, given a topological end $\epsilon\in \Omega'(G)$, we have from Theorem 4.11 of the paper \cite{topologicalendsversuscombinatorialends} that $\phi(\epsilon)=\varepsilon\in \Omega(G)$ is a non-dominated end of $G$. Let $r\in \mathcal{R}(G)$ be such that $[r]=\varepsilon=\phi(\epsilon)$. Since $\varepsilon$ is non-dominated, $r$ is non-dominated. Therefore, $f([r])=[r]_E$ is an almost non-dominated edge-end.

Conversely, let $\varepsilon'\in \Omega_E(G)$ be an almost non-dominated edge-end. Then, there exists $r\in \mathcal{R}(G)$ such that $[r]_E=\varepsilon'$ and $r$ is non-dominated. Therefore, $[r]\in \Omega(G)$ is a non dominated end, where $f([r])=[r]_E=\varepsilon'$. Thus, there exists $\epsilon\in \Omega'(G)$ such that $\phi(\epsilon)=[r]$ and, therefore, $f(\phi(\epsilon))=\varepsilon'$.

\end{proof}

\section*{Acknowledgments}
The first named author thanks the support of Fundação de Amparo à Pesquisa do Estado de São Paulo (FAPESP), being sponsored through grant number 2025/12199-3. The second named author acknowledges the support of Conselho Nacional de Desenvolvimento Científico e Tecnológico (CNPq) through grant number 165761/2021-0. The third named author acknowledges the support of Conselho Nacional de Desenvolvimento Científico e Tecnológico (CNPq) through grant number 141373/2025-3. This study was financed in part
by the Coordenação de Aperfeiçoamento de Pessoal de Nível Superior – Brasil (CAPES) – Finance Code 001.

\bibliography{referencias}
\bibliographystyle{plain}

\Addresses

\end{document}